\newtheorem{theorem}{Theorem}[section]
\newtheorem{definition}[theorem]{Definition}
\newtheorem{proposition}[theorem]{Proposition}
\newtheorem{corollary}[theorem]{Corollary}
\newtheorem{lemma}[theorem]{Lemma}
\newtheorem{fact}[theorem]{Remark}
\newtheorem{exemplu}[theorem]{Example}
\newcommand{\bdfn}{\begin{definition}}
\newcommand{\edfn}{\end{definition}}
\newcommand{\bthm}{\begin{theorem}}
\newcommand{\ethm}{\end{theorem}}
\newcommand{\bprop}{\begin{proposition}}
\newcommand{\eprop}{\end{proposition}}
\newcommand{\bcor}{\begin{corollary}}
\newcommand{\ecor}{\end{corollary}}
\newcommand{\blem}{\begin{lemma}}
\newcommand{\elem}{\end{lemma}}
\newcommand{\bfact}{\begin{fact}}
\newcommand{\efact}{\end{fact}}
\newcommand{\bex}{\begin{exemplu}\begin{rm}}
\newcommand{\eex}{\end{rm}\end{exemplu}}
\def\R{{\mathbb R}}
\def\N{{\mathbb N}}
\newcommand{\lambdaxy}{(1-\lambda)x\oplus\lambda y}
\newcommand{\midxy}{\frac12x\oplus\frac12y}
\newcommand{\midyz}{\frac12y\oplus\frac12z}
\newcommand{\eps}{\varepsilon}
\newcommand{\ol}{\overline}
\newcommand{\be}{\begin{enumerate}}
\newcommand{\ee}{\end{enumerate}}
\newcommand{\bt}{\begin{tabular}}
\newcommand{\et}{\end{tabular}}
\newcommand{\beq}{\begin{equation}}
\newcommand{\eeq}{\end{equation}}
\newcommand{\ba}{\begin{array}} 
\newcommand{\ea}{\end{array}}
\newcommand {\bea} {\begin{eqnarray}}
\newcommand {\eea} {\end {eqnarray}}
\newcommand {\bua} {\begin{eqnarray*}}
\newcommand {\eua} {\end {eqnarray*}}
\newcommand{\se}{\subseteq}
\newcommand{\ds}{\displaystyle}
\newcommand{\Ra}{\Rightarrow}
\newcommand{\limn}{\ds\lim_{n\to\infty}}
\begin{document}

\title{Nonexpansive iterations in uniformly convex $W$-hyperbolic spaces}
\author{Lauren\c tiu Leu\c stean\\[0.2cm] 
\footnotesize Department of Mathematics, Darmstadt University of Technology,\\
\footnotesize Schlossgartenstrasse 7, 64289 Darmstadt, Germany\\[0.1cm]
\footnotesize and\\
\footnotesize Institute of Mathematics "Simion Stoilow'' of the Romanian Academy, \\
\footnotesize Calea Grivi\c tei 21, P.O. Box 1-462, Bucharest, Romania\\[0.1cm]
\footnotesize E-mail: leustean@mathematik.tu-darmstadt.de}
\date{}
\maketitle

\begin{abstract}
We propose the class of uniformly convex $W$-hyperbolic spaces with monotone modulus of uniform convexity ($UCW$-hyperbolic spaces for short) as an appropriate setting for the study of nonexpansive iterations. $UCW$-hyperbolic spaces are a natural generalization both of uniformly convex normed spaces and $CAT(0)$-spaces. Furthermore, we apply proof mining techniques to get effective rates of asymptotic regularity for Ishikawa iterations of nonexpansive self-mappings of closed convex subsets in $UCW$-hyperbolic spaces. These effective results are new even for uniformly convex  Banach spaces.
\end{abstract}
 
\maketitle

\section{Introduction}

In this paper we propose the class of uniformly convex $W$-hyperbolic spaces with monotone modulus of uniform convexity ($UCW$-hyperbolic spaces for short) as an appropriate setting for the study of nonexpansive iterations. This class of geodesic spaces, which will be defined in Section \ref{Ishikawa-section-UC-hyp}, is a natural generalization both of uniformly convex normed spaces and $CAT(0)$-spaces. As we shall see in Section \ref{Ishikawa-section-UC-hyp}, complete $UCW$-hyperbolic spaces have very nice properties. Thus, the intersection of any decreasing sequence of nonempty bounded closed convex subsets is nonempty (Proposition \ref{Ishikawa-CIP-uchyp}) and closed convex subsets are Chebyshev sets (Proposition \ref{uchyp-closed-convex-Chebyshev}). 

The asymptotic center technique, introduced by Edelstein \cite{Ede72,Ede74}, is one of the most useful tools in metric fixed point theory of nonexpansive mappings in uniformly convex Banach spaces, due to the fact that bounded sequences have unique asymptotic centers with respect to closed convex subsets. We prove that this basic property is true for complete $UCW$-hyperbolic spaces too (Proposition \ref{UCW-unique-ac}). The main result of Section \ref{UCW-as-cen-fpp} is Theorem \ref{equiv-T-fpp}, which uses methods involving asymptotic centers  to get, for nonexpansive self-mappings $T:C\to C$ of convex closed subsets of complete $UCW$-hyperbolic spaces, equivalent characterizations of the fact that $T$ has fixed points in terms of boundedness of different iterations associated with $T$. As an immediate consequence of Theorem \ref{equiv-T-fpp}, we obtain a generalization to complete $UCW$-hyperbolic spaces of the well-known Browder-Goehde-Kirk Theorem.

In the second part of the paper, we apply proof mining techniques to give effective rates of asymptotic regularity for Ishikawa iterations of nonexpansive self-mappings of closed convex subsets in $UCW$-hyperbolic spaces. We emphasize that our results are new even for the normed case. By {\em proof mining} we mean the logical analysis of mathematical proofs with the aim of extracting new numerically relevant information hidden in the proofs. We refer to Kohlenbach's book \cite{Koh08-book} for details on proof mining.

If $(X,\|\cdot\|)$ is a normed space, $C\se X$ a nonempty convex subset of $X$  and $T:C\to C$  is nonexpansive, then the {\em Ishikawa iteration} \cite{Ish74} starting with $x\in C$ is defined by 
\beq
x_0:=x, \quad x_{n+1}=(1-\lambda_n)x_n+\lambda_nT((1-s_n)x_n+ s_nTx_n),
\eeq
where $(\lambda_n),(s_n)$ are sequences in $[0,1]$. By letting $s_n=0$ for all $n\in\N$, we get the Krasnoselski-Mann iteration as  a special case.

In Section \ref{Ishikawa-proof-mining}, we consider the important problem of asymptotic regularity associated with the Ishikawa iterations:
\[\lim_{n\to\infty}d(x_n,Tx_n)=0.\]

Our point of departure is the following result, proved by Tan and Xu \cite{TanXu93} for uniformly convex Banach spaces and, recently,  by Dhompongsa and Panyanak \cite{DhoPan08} for $CAT(0)$-spaces.
 
\bprop
Let $X$ be a uniformly convex Banach space or a $CAT(0)$-space, $C\se X$ a nonempty bounded closed convex subset and $T:C\to C$ be nonexpansive.  Assume that $\ds\sum_{n=0}^\infty\lambda_n(1-\lambda_n)$ diverges, $\limsup_n s_n<1$ and $\ds\sum_{n=0}^\infty s_n(1-\lambda_n)$ converges.

Then for all $x\in C$,
\[\limn d(x_n,Tx_n)=0.\]  
\eprop

Using proof mining methods we obtain a quantitative version (Theorem \ref{Ishikawa-quant-as-reg}) of a two-fold generalization of the above proposition:
\begin{itemize}
\item[-] firstly, we  consider $UCW$-hyperbolic spaces;
\item[-] secondly, we assume that $F(T)\ne\emptyset$ instead of assuming the boundedness of $C$.
\end{itemize}
The idea is to combine methods used in \cite{Leu07} to obtain effective rates of asymptotic regularity for Krasnoselski-Mann iterates with the ones used in \cite{Leu07a} to get rates of asymptotic regularity for Halpern iterates.

In this way, we provide for the first time (even for the normed case) effective rates of asymptotic regularity for the Ishikawa iterates, that is rates of convergence of $(d(x_n,Tx_n))$ towards $0$. 

For bounded $C$ (Corollary \ref{Ishikawa-bounded-C}), the rate of asymptotic regularity is uniform in  the nonexpansive mapping $T$ and the starting point $x\in C$ of the iteration, and it depends  on $C$ only via its diameter and on the space $X$ only via the modulus of uniform convexity.

\section{$UCW$-hyperbolic spaces}\label{Ishikawa-section-UC-hyp}

We work in the setting of hyperbolic spaces as introduced by Kohlenbach \cite{Koh05}. In order to distinguish them from Gromov hyperbolic spaces \cite{BriHae99-book} or from other notions of 'hyperbolic space'  which can be found in the literature (see for example \cite{Kir82,GoeKir83,ReiSha90}), we shall call them W-hyperbolic spaces. 

A {\em  $W$-hyperbolic space}  $(X,d,W)$ is a metric space $(X,d)$ together with a convexity mapping $W:X\times X\times [0,1]\to X$ satisfying 
\begin{eqnarray*}
(W1) & d(z,W(x,y,\lambda))\le (1-\lambda)d(z,x)+\lambda d(z,y),\\
(W2) & d(W(x,y,\lambda),W(x,y,\tilde{\lambda}))=|\lambda-\tilde{\lambda}|\cdot 
d(x,y),\\
(W3) & W(x,y,\lambda)=W(y,x,1-\lambda),\\
(W4) & \,\,\,d(W(x,z,\lambda),W(y,w,\lambda)) \le (1-\lambda)d(x,y)+\lambda
d(z,w).
\end {eqnarray*}

The convexity mapping $W$ was first considered by Takahashi in \cite{Tak70}, where a triple $(X,d,W)$ satisfying $(W1)$ is called a {\em convex metric space}. If $(X,d,W)$ satisfies $(W1)-(W3)$, then we get the notion of {\em space of hyperbolic type} in the sense of Goebel and Kirk \cite{GoeKir83}. $(W4)$ was already considered by Itoh \cite{Ito79} under the name `condition III' and it is used by Reich and Shafrir \cite{ReiSha90} and Kirk \cite{Kir82} to define their notions of hyperbolic space. We refer to \cite[p.384-387]{Koh08-book} for a detailed discussion.

The class of $W$-hyperbolic spaces includes normed spaces and convex subsets thereof, the Hilbert ball (see \cite{GoeRei84-book} for a book treatment)  as well as $CAT(0)$-spaces.

If $x,y\in X$ and $\lambda\in[0,1]$, then we use the notation $(1-\lambda)x\oplus \lambda y$ for $W(x,y,\lambda)$. The following holds even for the more general setting of convex metric spaces \cite{Tak70}: for all $x,y\in X$ and  $\lambda\in[0,1]$,
\beq
d(x,\lambdaxy)=\lambda d(x,y),\text{~and~} d(y,\lambdaxy)=(1-\lambda)d(x,y). \label{prop-xylambda}
\eeq
As a consequence, $1x\oplus 0y=x,\,0x\oplus 1y=y$ and $(1-\lambda)x\oplus \lambda x=\lambda x\oplus (1-\lambda)x=x$. 

For all $x,y\in X$, we shall denote by $[x,y]$ the set $\{(1-\lambda)x\oplus \lambda y:\lambda\in[0,1]\}$. Thus, $[x,x]=\{x\}$ and for $x\ne y$, the mapping
\[\gamma_{xy}:[0,d(x,y)]\to\R, \quad \gamma(\alpha)=\left(1-\frac{\alpha}{d(x,y)}\right)x\oplus \frac{\alpha}{d(x,y)}y \]
is a geodesic satisfying $\gamma_{xy}\big([0,d(x,y)]\big)=[x,y]$. That is, any $W$-hyperbolic space is a geodesic space.

A nonempty subset $C\subseteq X$ is {\em convex} if $[x,y]\se C$ for all $x,y\in C$. A nice feature of our setting is that any convex subset is itself a $W$-hyperbolic space with the restriction of $d$ and $W$ to $C$. It is easy to see that open and closed balls are convex. Moreover, using (W4), we get that the closure of a convex subset of a $W$-hyperbolic space is again convex.

If $C$ is a convex subset of $X$, then a function $f:C\to\R$ is said to be {\em convex} if 
\[f\left(\lambdaxy\right)\le (1-\lambda)f(x)+\lambda f(y)\]
for all $x,y\in C,\lambda\in[0,1]$.

One of the most important classes of Banach spaces are the uniformly convex ones, introduced by Clarkson in the 30's \cite{Cla36}. 
Following \cite[p. 105]{GoeRei84-book}, we can define uniform convexity for $W$-hyperbolic spaces too.

A $W$-hyperbolic space $(X,d,W)$ is  {\em uniformly convex} \cite{Leu07} if for 
any $r>0$ and any $\varepsilon\in(0,2]$ there exists $\delta\in(0,1]$ such that 
for all $a,x,y\in X$,
\begin{eqnarray}
\left.\begin{array}{l}
d(x,a)\le r\\
d(y,a)\le r\\
d(x,y)\ge\varepsilon r
\end{array}
\right\}
& \quad \Rightarrow & \quad d\left(\frac12x\oplus\frac12y,a\right)\le (1-\delta)r. \label{Ishikawa-uc-def}
\end{eqnarray}
A mapping $\eta:(0,\infty)\times(0,2]\rightarrow (0,1]$ providing such a
$\delta:=\eta(r,\varepsilon)$ for given $r>0$ and $\varepsilon\in(0,2]$ is called a {\em modulus of uniform convexity}. We call $\eta$ {\em monotone} if it decreases with $r$ (for a fixed $\eps$).

\blem\label{eta-prop-1}\cite{Leu07,KohLeu08a}
Let $(X,d,W)$ be a uniformly convex $W$-hyperbolic space and $\eta$ be a modulus of uniform convexity. Assume that $r>0,\varepsilon\in(0,2], a,x,y\in X$ are such that 
\[d(x,a)\le r,\,\,d(y,a)\le r \text{~and~} d(x,y)\ge\eps r.\]
 Then for any $\lambda\in[0,1]$,
\be
\item\label{Groetsch-eta} $\ds d(\lambdaxy,a)\le  \big(1-2\lambda(1-\lambda)\eta(r,\varepsilon)\big)r$; 
\item\label{eta-monotone-eps} for any $\psi\in (0,2]$ such that $\psi\le\eps$, 
\[\ds d(\lambdaxy,a)\le  \big(1-2\lambda(1-\lambda)\eta(r,\psi)\big)r\,;\]
\item  \label{eta-s-geq-r} for any $s\geq r$, 
\[d(\lambdaxy,a) \le \left(1-2\lambda(1-\lambda)\eta\left(s,\frac{\eps r}{s}\right)\right)s\,;\]
\item\label{eta-monotone-s-geq-r}if $\eta$ is monotone, then for any $s\geq r$, 
\[d(\lambdaxy,a) \le \left(1-2\lambda(1-\lambda)\eta\left(s,\eps\right)\right)r\,.\]
\ee
\elem
\begin{proof}
(i) is a generalization to our setting of a result due to Groetsch \cite{Gro72}. We refer to \cite[Lemma 7]{Leu07} for the proof.\\
(ii),(iii) are immediate; see \cite[Lemma 2.1]{KohLeu08a}.\\
(iv) Use (i) and the fact that $\eta(r,\eps)\geq \eta(s,\eps)$, hence $1-2\lambda(1-\lambda)\eta(r,\varepsilon)\leq 1-2\lambda(1-\lambda)\eta(s,\varepsilon)$.
\end{proof}

We shall refer to  uniformly convex $W$-hyperbolic spaces with a monotone modulus of uniform convexity as {\em $UCW$-hyperbolic spaces}. It turns out  \cite{Leu07} that $CAT(0)$-spaces  are   $UCW$-hyperbolic spaces with modulus of uniform convexity $\ds\eta(r,\varepsilon)=\varepsilon^2/8$ quadratic in $\eps$. Thus, $UCW$-hyperbolic spaces are a natural generalization of both uniformly convex normed spaces and $CAT(0)$-spaces.

For the rest of this section, $(X,d,W)$ is a complete $UCW$-hyperbolic space and $\eta$ is a monotone modulus of uniform convexity. 

\bprop\label{Ishikawa-CIP-uchyp}\cite[Proposition 2.2]{KohLeu08a}
The intersection of any decreasing sequence of nonempty bounded closed convex subsets of $X$ is nonempty.
\eprop

The next proposition, inspired by \cite[Proposition 2.2]{GoeRei84-book}, is essential for what it follows. 

\bprop\label{uc-hyp-basic-prop-as-center}
Let $C$ be a nonempty closed convex subset of $X$, $f:C\to[0,\infty)$ be convex and lower semicontinuous. Assume moreover that for all sequences $(x_n)$ in $C$,
\[\limn d(x_n,a)=\infty \text{~for some~} a\in X \text{~implies~} \limn f(x_n)=\infty.\]
Then $f$ attains its minimum on $C$.  If, in addition, 
\[f\left(\midxy\right) < \max\{f(x),f(y)\}\]
for all $x\neq y$, then $f$ attains its minimum at exactly one point.
\eprop
\begin{proof} 
Let $\alpha$ be the infimum of $f$ on $C$ and define
\[C_n:=\left\{x\in C\mid f(x)\leq \alpha+\frac1n\right\}\]
for all $n\in\N$. It is easy to see that we can apply Proposition \ref{Ishikawa-CIP-uchyp} to the sequence $(C_n)_{n\in\N}$ to get the existence of $x^\star\in \bigcap_{n\in\N}C_n$. It follows that $\ds f(x^\star)\leq \alpha+\frac1n$ for all $n\geq 1$, hence $f(x^\star)\leq \alpha$. Since $\alpha$ is the infimum of $f$, we can conclude that $f(x^\star)=\alpha$, that is $f$ attains its minimum on $C$. The second part of the conclusion is immediate. If $f$ attains its minimum at two points $x^\star\neq y^\star$, then $\frac12 x^\star\oplus \frac12 y^\star\in C$, since $C$ is convex,  and $\ds f\left(\frac12 x^\star\oplus \frac12 y^\star\right) <\max\{f( x^\star), f(y^\star)\}=\alpha$, which is a contradiction.
\end{proof}

Let us recall that a subset $C$ of a metric space $(X,d)$ is called a {\em Chebyshev set} if to each point $x\in X$ there corresponds a unique point $z\in C$ such that $d(x,z)= d(x,C)(=\inf\{d(x,y)\mid y\in C\})$. If $C$ is a Chebyshev set, one can define the {\em nearest point projection} $P:X\to C$ by assigning $z$ to $x$.

\bprop\label{uchyp-closed-convex-Chebyshev}
Every  nonempty closed convex subset  $C$ of $X$ is a Chebyshev set.
\eprop
\begin{proof} 
Let $x\in X$ and define $f:C\to [0,\infty), \quad f(y)=d(x,y)$. 
Then $f$ is continuous, convex (by (W1)), and for any sequence $(y_n)$ in $C$, $\limn d(y_n,a)=\infty$ for some $a\in X$ implies $\limn f(y_n)= \infty$, since $f(y_n)=d(x,y_n)\geq d(y_n,a)-d(x,a)$. Moreover, let $y\neq z\in C$ and denote $M:=\max\{f(y),f(z)\}>0$. Then 
\[d(x,y), d(x,z)\leq M\,\,\text{~and~} d(y,z)\geq \eps\cdot M,\]
where $\ds\eps:=\frac{d(y,z)}M$ and $\ds 0<\eps\leq \frac{d(x,y)+d(x,z)}M\leq 2 $. Hence, by uniform convexity it follows that
\[d\left(\midyz,x\right)\leq (1-\eta(M,\eps))\cdot M<M.\]
Thus, $f$ satisfies all the hypotheses of Proposition \ref{uc-hyp-basic-prop-as-center}, so we can apply it to conclude that $f$ has a unique minimum. Hence, $C$ is a Chebyshev set. 
\end{proof}

\section{Asymptotic centers and fixed point theory of nonexpansive mappings}\label{UCW-as-cen-fpp} 

In the sequel, we recall basic facts about asymptotic centers. We refer to \cite{Ede72,Ede74,GoeRei84-book} for all the unproved results.

Let $(X,d)$ be a metric space, $(x_n)$ be a bounded sequence in $X$ and $C\se X$ be a nonempty subset of $X$. We define the following functionals:

\bua
r_m(\cdot, (x_n)):X\to [0,\infty), \quad r_m(y,(x_n))&=&\sup\{d(y,x_n)\mid n\geq m\}\\
&& \text{for~}m\in\N,\\
r(\cdot, (x_n)):X\to[0,\infty), \quad r(y,(x_n)) &=&\limsup_{n}d(y,x_n)=\inf_{m}r_m(y,(x_n)) \label{def-r-xn}\\
&=&\lim_{m\to\infty}r_m(y,(x_n)).
\eua

The following lemma collects some basic properties of the above functionals. 

\blem\label{prop-rm-r}
Let $y\in X$.
\be
\item $r_m(\cdot, (x_n))$ is nonexpansive for all $m\in \N$;
\item $r(\cdot, (x_n))$ is continuous and $r(y,(x_n))\to\infty$ whenever $d(y,a)\to\infty$ for some $a\in X$;
\item $r(y, (x_n))=0$ if and only if $\limn x_n=y$;
\item if $(X,d,W)$ is a convex metric space and $C$ is convex, then $r(\cdot, (x_n))$ is a convex function.
\ee
\elem

The {\em asymptotic radius of } $(x_n)$ {\em with respect to} $C$ is defined by
\[r(C,(x_n))  = \inf\{r(y,(x_n))\mid y\in C\}.\]
The  {\em asymptotic radius} of $(x_n)$, denoted by $r((x_n))$, is the asymptotic radius of $(x_n)$ with respect to $X$, that is $r((x_n))=r(X,(x_n))$.

A point $c\in C$ is said to be an {\em asymptotic center} of $(x_n)$ {\em with respect to } $C$ if
\[r(c,(x_n))= r(C,(x_n))=\min\{r(y,(x_n))\mid y\in C\}.\]
We denote with $A(C,(x_n))$ the set of asymptotic centers of $(x_n)$ with respect to $C$. When $C=X$, we call $c$ an {\em asymptotic center} of $(x_n)$ and we use the notation $A((x_n))$ for $A(X,(x_n))$. 

The following lemma, inspired by \cite[Theorem 1]{Ede74}, turns out to be very useful in the following. 
 
\blem\label{useful-unique-as-center} 
Let $(x_n)$ be a bounded sequence in $X$ with $A(C,(x_n))=\{c\}$ and  $(\alpha_n),(\beta_n)$ be real sequences such that $\alpha_n\geq 0$ for all $n\in\N$, $\limsup_n \alpha_n\leq 1$ and $\limsup_n \beta_n\leq 0$.\\
 Assume that $y\in C$  is such that there exist $p,N\in\N$ satisfying 
\[\forall n\ge N\bigg(d(y,x_{n+p})\leq \alpha_nd(c,x_n)+\beta_n\bigg).\] 
Then $y=c$.
\elem
\begin{proof}
We have that 
\bua
r(y,(x_n))&=&\!\!\!\limsup_n d(y,x_n)= \limsup_n d(y,x_{n+p})\leq \limsup_n \big(\alpha_nd(c,x_n)+\beta_n\big)\\
&\leq & \!\!\! \limsup_n\alpha_n\cdot \limsup_n d(c,x_n)+\limsup_n\beta_n \leq \limsup_n d(c,x_n)\\
&=& \!\!\!r(c,(x_n)).
\eua
 Since $c$ is unique with the property that $r(c,(x_n))=\min\{r(z,(x_n))\mid z\in C\}$, we must have $y=c$.
\end{proof}

In general, the set $A(C,(x_n))$ of asymptotic centers of a bounded sequence $(x_n)$ with respect to $C\se X$ may be empty or even contain infinitely many points. 

The following result shows that in the case of complete $UCW$-hyperbolic spaces, the situation is as nice as for uniformly convex Banach spaces (see, for example, \cite[Theorem 4.1]{GoeRei84-book}).

\bprop\label{UCW-unique-ac}
Let $(X,d,W)$ be a complete $UCW$-hyperbolic space. Every bounded sequence $(x_n)$ in $X$ has a unique asymptotic center with respect to any nonempty closed convex subset $C$ of $X$.
\eprop
\begin{proof}
Let $\eta$ be a monotone modulus of uniform convexity. We apply Proposition \ref{uc-hyp-basic-prop-as-center} to show that the function $r(\cdot, (x_n)):C\to [0,\infty)$ attains its minimum at exactly one point. By Lemma \ref{prop-rm-r}, it remains to prove that 
\[ r\left(\midyz,(x_n)\right) < \max\{r(y,(x_n)),r(z,(x_n))\} \quad \text{whenever~} y,z\in C, y\neq z.\]
Let $M:=\max\{r(y,(x_n)),r(z,(x_n))\}>0$. For every $\eps\in(0,1]$ there exists $N$ such that $d(y,x_n),d(z,x_n)\leq M+\eps\le M+1$ for all  $n\geq N$. Moreover, $\ds d(y,z) =\frac{d(y,z)}{M+\eps}\cdot (M+\eps)\geq \frac{d(y,z)}{M+1}\cdot (M+\eps)$.
Thus, we can apply Lemma \ref{eta-prop-1}. (\ref{eta-monotone-s-geq-r}) to get that for all $n\geq N$,
\bua
d\left(\midyz,x_n\right) & \leq & \left(1-\eta\left(M+1,\frac{d(y,z)}{M+1}\right)\right)(M+\eps),
\eua
hence
\[r\left(\midyz,(x_n)\right)\leq  \left(1-\eta\left(M+1,\frac{d(y,z)}{M+1}\right)\right)(M+\eps).\]
By letting $\eps\to 0$, it follows that 
\[r\left(\midyz,(x_n)\right)\leq  \left(1-\eta\left(M+1,\frac{d(y,z)}{M+1}\right)\right)\cdot M\,\,<\,M.\]
This completes the proof.
\end{proof}

Let $T:C\to C$. We shall denote with $F(T)$ the set of fixed points of $T$. For any $x\in C$ and any $b,\eps>0$ we shall use the notation
\[Fix_{\varepsilon}(T,x,b)=\{y\in C\mid d(y,x)\leq b \text{~and~} d(y,Ty)<\eps\}.\]
If $Fix_{\varepsilon}(T,x,b)\ne \emptyset$ for all $\eps>0$, we say that $T$ {\em has approximate fixed points} in a $b$-neighborhood of $x$ . 

\blem
The following are equivalent.
\be
\item there exists a bounded sequence $(x_n)$ in $C$ such that $\limn d(x_n,Tx_n)=0$;
\item for all $x\in C$ there exists  $b>0$ such that $T$ has approximate fixed points in a $b$-neighborhood of $x$;
\item there exist $x\in C$ and $b>0$ such that $T$ has approximate fixed points in a $b$-neighborhood of $x$.
\ee 
\elem
\begin{proof}
$(i)\Ra(ii)$ Take as $b$ any bound on $(d(x,x_n))$.\\
$(ii)\Ra(iii)$ Obviously.\\
$(iii)\Ra(i)$ Let $x\in C$ and $b>0$ be such that $Fix_{\varepsilon}(T,x,b)\ne \emptyset$ for all $\eps>0$. Apply this with $\ds\eps:=\frac1n$ to get $x_n\in C$ satisfying (i).
\end{proof}

In the sequel, we assume that $(X,d,W)$ is a $W$-hyperbolic space, $C\se X$  is convex and $T:C\to C$ is {\em nonexpansive}, that is 
\[d(Tx,Ty)\leq d(x,y)\]
for all $x,y\in C$. For any $\lambda\in(0,1]$, the {\em averaged mapping} $T_\lambda$ is defined by
\[T_\lambda:C\to C, \quad T_\lambda(x)=(1-\lambda)x\oplus\lambda Tx.\]
It is easy to see that $T_\lambda$ is also nonexpansive and that $F(T)=F(T_\lambda)$. 

The {\em Krasnoselski iteration} \cite{Kra55, Sch57} $(x_n)$ starting with $x\in C$ is defined as the Picard iteration $\big(T_\lambda^n(x)\big)$ of $T_\lambda$, that is
\begin{equation}
x_0:=x, \quad x_{n+1}:=(1-\lambda)x_n \oplus\lambda Tx_n. \label{Ishikawa-K-iteration-def-hyp}
\end{equation}
By allowing general sequences  $(\lambda_n)$ in $[0,1]$, we get the {\em Krasnoselski-Mann iteration} \cite{Man53} (called {\em segmenting Mann iterate} in \cite{Gro72})  $(x_n)$ starting with $x\in C$: 
\begin{equation}
x_0:=x, \quad x_{n+1}:=(1-\lambda_n)x_n \oplus\lambda_n Tx_n. \label{Ishikawa-KM-iteration-def-hyp}
\end{equation}

The following lemma collects some known properties of Krasnoselski-Mann iterates in $W$-hyperbolic spaces. For the sake of completeness we prove them here.

\begin{lemma}\label{useful-KM-x-y}
Let $(x_n),(y_n)$ be the Krasnoselski-Mann iterations starting with $x,y\in C$. Then 
\be
\item\label{KM-xn-yn-decreasing} $(d(x_n,y_n))$ is decreasing;
\item\label{KM-xn-fp-decreasing} if $p$ is a fixed point of $T$, then $(d(x_n,p))$ is decreasing;
\item\label{KM-useful-xn+1-Ty} $d(x_{n+1},Ty)\le d(x_n,y)+(1-\lambda_n)d(y,Ty)$ for all $n\in\N$,
\ee
\end{lemma}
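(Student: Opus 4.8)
The statement to prove is Lemma~\ref{useful-KM-x-y}, which collects three properties of Krasnoselski--Mann iterates in a $W$-hyperbolic space. The plan is to prove all three parts by direct computation from the defining recursion $x_{n+1}=(1-\lambda_n)x_n\oplus\lambda_nTx_n$, using axioms $(W1)$ and $(W4)$ together with nonexpansivity of $T$; no uniform convexity is needed here.

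For part~\eqref{KM-xn-yn-decreasing}, I would write $x_{n+1}=W(x_n,Tx_n,\lambda_n)$ and $y_{n+1}=W(y_n,Ty_n,\lambda_n)$ with the \emph{same} parameter $\lambda_n$, apply $(W4)$ to get $d(x_{n+1},y_{n+1})\le(1-\lambda_n)d(x_n,y_n)+\lambda_nd(Tx_n,Ty_n)$, and then bound $d(Tx_n,Ty_n)\le d(x_n,y_n)$ by nonexpansiveness, yielding $d(x_{n+1},y_{n+1})\le d(x_n,y_n)$. Part~\eqref{KM-xn-fp-decreasing} is the special case obtained by taking the constant sequence $y_n=p$: since $Tp=p$, the Krasnoselski--Mann iteration starting at $p$ is stationary at $p$, so \eqref{KM-xn-yn-decreasing} applied to $x$ and $p$ gives that $d(x_n,p)$ is decreasing. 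Alternatively one can argue directly via $(W1)$: $d(x_{n+1},p)=d(W(x_n,Tx_n,\lambda_n),p)\le(1-\lambda_n)d(x_n,p)+\lambda_nd(Tx_n,p)=(1-\lambda_n)d(x_n,p)+\lambda_nd(Tx_n,Tp)\le d(x_n,p)$.

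For part~\eqref{KM-useful-xn+1-Ty}, I would again use $(W1)$ applied to $z=Ty$ and $x_{n+1}=W(x_n,Tx_n,\lambda_n)$:
\[
d(x_{n+1},Ty)\le(1-\lambda_n)d(x_n,Ty)+\lambda_nd(Tx_n,Ty).
\]
Then I bound the two terms separately: for the second, nonexpansiveness gives $d(Tx_n,Ty)\le d(x_n,y)$; for the first, the triangle inequality gives $d(x_n,Ty)\le d(x_n,y)+d(y,Ty)$. Substituting,
\[
d(x_{n+1},Ty)\le(1-\lambda_n)\big(d(x_n,y)+d(y,Ty)\big)+\lambda_nd(x_n,y)=d(x_n,y)+(1-\lambda_n)d(y,Ty),
\]
which is exactly the claimed inequality.

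I do not anticipate a genuine obstacle here; all three parts are routine applications of the hyperbolic-space axioms and nonexpansiveness. The only point requiring a little care is making sure to invoke $(W4)$ (rather than just $(W1)$) in part~\eqref{KM-xn-yn-decreasing}, since there we are comparing the images under $W$ of two \emph{different} pairs of points at the same parameter $\lambda_n$; $(W1)$ alone would not suffice, whereas $(W4)$ is precisely designed for this. Everything else follows from $(W1)$ and the triangle inequality.
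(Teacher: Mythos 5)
Your proposal is correct and follows essentially the same route as the paper: (W4) plus nonexpansiveness for part (\ref{KM-xn-yn-decreasing}), the direct (W1) computation for part (\ref{KM-xn-fp-decreasing}) (your alternative derivation of (\ref{KM-xn-fp-decreasing}) from (\ref{KM-xn-yn-decreasing}) via the stationary iteration at $p$ is also valid, using $(1-\lambda)p\oplus\lambda p=p$), and (W1) with the triangle inequality for part (\ref{KM-useful-xn+1-Ty}). No gaps.
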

\begin{proof}
\bua
d(x_{n+1},y_{n+1})&\leq&(1-\lambda_n)d(x_n,y_n)+\lambda_n d(Tx_n,Ty_n)\quad \text{by (W4)}\\
&\le& d(x_n,y_n), \quad \text{since T is nonexpansive},\\
d(x_{n+1},p)&\le & (1-\lambda_n)d(x_n,p)+\lambda_nd(Tx_n,p) \quad \text{by (W1)} \\
&=& (1-\lambda_n)d(x_n,p)+\lambda_nd(Tx_n,Tp)\\
&\leq&  (1-\lambda_n)d(x_n,p)+\lambda_n d(x_n,p)= d(x_n,p),\\
d(x_{n+1},Ty)&\le & (1-\lambda_n)d(x_n,Ty)+\lambda_n d(Tx_n,Ty)\quad \text{by (W1)}\\
&\le &(1-\lambda_n)d(x_n,y)+(1-\lambda_n)d(Ty,y)+\lambda_nd(x_n,y)\\
&\le &d(x_n,y)+(1-\lambda_n)d(Ty,y).
\end {eqnarray*}
\end{proof}

We can prove now the main theorem of this section.

\bthm\label{equiv-T-fpp}
Let $(X,d,W)$ be a complete $UCW$-hyperbolic space, $C\se X$ a nonempty convex closed subset and $T:C\to C$ be nonexpansive.\\
 The following are equivalent.
\be
\item\label{T-f} $T$ has fixed points;
\item\label{un-bounded-Tun-un} there exists a bounded sequence $(u_n)$ in $C$ such that $\limn d(u_n,Tu_n)=0$;
\item\label{P-bounded-exists-x} the sequence $(T^nx)$ of Picard iterates is bounded for some $x\in C$;
\item\label{P-bounded-forall-x} the sequence $(T^nx)$ of Picard iterates is bounded for all $x\in C$;
\item\label{KM-bounded-exists-x-lambdan-ABC} the Krasnoselski-Mann  iteration $(x_n)$ is bounded for some $x\in C$ and  for $(\lambda_n)$ in $[0,1]$ satisfying one of the following conditions:
\be
\item $\lambda_n=\lambda\in(0,1]$;
\item $\limn\lambda_n=1$;
\item $\limsup_n \lambda_n<1$ and $\ds \sum_{n=0}^\infty\lambda_n$ diverges;
\ee
\item\label{KM-bounded-forall-x-lambdan} the Krasnoselski-Mann  iteration $(x_n)$ is bounded for all $x\in C$ and all $(\lambda_n)$ in $[0,1]$.
\ee
\ethm
\begin{proof}
$(\ref{T-f})\Ra(\ref{un-bounded-Tun-un})$ Let $p$ be a fixed point of $T$ and define $u_n:=p$ for all $n\in\N$.
$(\ref{un-bounded-Tun-un})\Ra(\ref{T-f})$ By Proposition \ref{UCW-unique-ac}, $(u_n)$ has a unique asymptotic center $c$ with respect to $C$. We get that for all $n\in\N$,
\bua
d(Tc,u_n)\leq d(Tc,Tu_n)+d(Tu_n,u_n)\leq d(c,u_n)+d(Tu_n,u_n).
\eua
We can apply now Lemma \ref{useful-unique-as-center}  with $y:=Tc$ and $p:=N:=0$, $\alpha_n:=1,\beta_n:=d(u_n,Tu_n)$ to get that $Tc=c$.\\
$(\ref{T-f})\Ra(\ref{P-bounded-exists-x})$ If $p$ is a fixed point of $T$, then $T^np=p$ for all $n\in\N$.\\
$(\ref{P-bounded-exists-x})\Ra(\ref{P-bounded-forall-x})$ Apply the fact that, since $T$ is nonexpansive, $d(T^nx,T^ny)\leq d(x,y)$ for all $x,y\in C$.\\
$(\ref{P-bounded-forall-x})\Ra(\ref{T-f})$ Let $c\in C$ be the unique asymptotic center of $(T^nx)$. Then
for all $n\in\N$,
\bua
d(Tc,T^{n+1}x)\leq d(c,T^nx),
\eua
hence we can apply Lemma \ref{useful-unique-as-center}  with $y:=Tc,x_n:=T^nx$ and $p:=1,N:=0$, $\alpha_n:=1,\beta_n:=0$ to get that $Tc=c$.\\
$(\ref{T-f})\Ra(\ref{KM-bounded-forall-x-lambdan})$ Let $p$ be a fixed point of $T$. Then for any $x\in C,(\lambda_n)$ in $[0,1]$, the sequence $(d(x_n,p)$ is decreasing, hence bounded from above by $d(x,p)$.\\
$(\ref{KM-bounded-forall-x-lambdan})\Ra(\ref{KM-bounded-exists-x-lambdan-ABC})$ Obviously.\\
$(\ref{KM-bounded-exists-x-lambdan-ABC})\Ra(\ref{T-f})$ 
\be
\item[(a)] If $\lambda_n=\lambda\in(0,1]$, then $(x_n)$ is the Krasnoselski iteration, hence the Picard iteration $T_\lambda^n(x)$ of the nonexpansive mapping $T_\lambda$. Apply now $(\ref{P-bounded-exists-x})\Ra(\ref{T-f})$ and the fact that $F(T)=F(T_\lambda)$ to get that $T$ has fixed points. 
\item[(b)] 
Assume now that $\limn \lambda_n=1$ and let $c\in C$ be the asymptotic center of $(x_n)$. By Lemma \ref{useful-KM-x-y}.(\ref{KM-useful-xn+1-Ty}), we get that
\bua
d(Tc,x_{n+1})\leq d(c,x_n)+(1-\lambda_n)d(c,Tc).
\eua
Apply now Lemma \ref{useful-unique-as-center} with $y:=Tc$ and $p:=1,N:=0$, $\alpha_n:=1,\beta_n:=(1-\lambda_n)d(c,Tc)$ to get that $Tc=c$.
\item[(c)] If $(\lambda_n)$ is bounded away from $1$ and divergent in sum,  $\lim d(x_n,Tx_n)=0$ by 
\cite[Theorem 3.21]{KohLeu03}, proved even for $W$-hyperbolic space. Hence (ii) holds.
\ee
\end{proof}

As an immediate consequence we obtain the generalization to complete $UCW$-hyperbolic spaces of the well-known Browder-Goehde-Kirk Theorem.

\bcor\label{BGK-uc-hyp-monotone}
Let $(X,d,W)$ be a complete $UCW$-hyperbolic space, $C\se X$  a nonempty bounded convex closed subset and $T:C\to C$ be nonexpansive. Then $T$ has fixed points.\ecor

\section{Rates of asymptotic regularity for the Ishikawa iterates}\label{Ishikawa-proof-mining}

\noindent Let $(X,d,W)$ be a W-hyperbolic space, $C\se X$ a nonempty convex subset of $X$  and $T:C\to C$ be nonexpansive. 

As in the case of normed spaces, we can define the {\em Ishikawa iteration} \cite{Ish74} starting with $x\in C$ by 
\beq
x_0:=x, \quad x_{n+1}=(1-\lambda_n)x_n\oplus \lambda_nT((1-s_n)x_n\oplus s_nTx_n),
\eeq
where $(\lambda_n),(s_n)$ are sequences in $[0,1]$. By letting $s_n=0$ for all $n\in\N$, we get the Krasnoselski-Mann iteration as  a special case.

We shall use the following notations 
\[y_n := (1-s_n)x_n\oplus s_nTx_n\]
and
\[T_n:C\to C, \quad T_n(x)= (1-\lambda_n)x\oplus \lambda_nT((1-s_n)x\oplus s_nTx).\]
Then 
\[x_{n+1}=(1-\lambda_n)x_n\oplus \lambda_nTy_n=T_nx_n\]
and it is easy to see that $F(T)\se F(T_n)$ for all $n\in\N$.

Before proving the main technical lemma, we give some  basic properties of Ishikawa iterates, which hold even in the very general setting of $W$-hyperbolic spaces. Their proofs follow closely the ones of the corresponding properties in uniformly convex Banach spaces \cite{TanXu93} or $CAT(0)$-spaces \cite{DhoPan08}, but, for the sake of completeness, we include the details. 

\blem\label{Ishikawa-useful}
\be
\item 
\bea
d(x_n,x_{n+1})=\lambda_nd(x_n,Ty_n), \quad d(Ty_n,x_{n+1})=(1-\lambda_n)d(x_n,Ty_n),\label{d-xn-Tyn-xn+1}\\
d(y_n,x_n)= s_nd(x_n,Tx_n), \quad d(y_n,Tx_n)=(1-s_n)d(x_n,Tx_n),\label{d-xn-yn-Txn}\\
(1-s_n)d(x_n,Tx_n) \leq d(x_n,Ty_n)\leq (1+s_n)d(x_n,Tx_n),\label{d-xn-Txn-d-xn-Tyn}\\
d(y_n,Ty_n)\leq d(x_n,Tx_n),\label{d-xn-Txn-d-yn-Tyn}\\
d(x_{n+1},Tx_{n+1})\leq (1+2s_n(1-\lambda_n))d(x_n,Tx_n). \label{Ishikawa-as-reg-base-ineq}
\eea
\item $T_n$ is nonexpansive for all $n\in\N$;
\item\label{Ishikawa-useful-p} For all  $p\in F(T)$, the sequence $(d(x_n,p))$ is decreasing  and 
\[d(y_n,p)\leq d(x_n,p)\quad \text{and}\quad d(x_n,Ty_n),d(x_n,Tx_n)\leq 2d(x_n,p).\]
\ee
\elem
\begin{proof}
\be
\item 
(\ref{d-xn-Tyn-xn+1}) and (\ref{d-xn-yn-Txn}) follow from (\ref{prop-xylambda}).\bua
d(x_n,Tx_n)&\leq &  d(x_n,Ty_n)+d(Ty_n,Tx_n)\leq d(x_n,Ty_n)+d(x_n,y_n)\\
&= & d(x_n,Ty_n)+s_nd(x_n,Tx_n) \,\,\text{by (\ref{d-xn-yn-Txn})},
\eua
hence $(1-s_n)d(x_n,Tx_n) \leq d(x_n,Ty_n)$.
\bua
d(x_n,Ty_n) &\leq & d(x_n,Tx_n)+d(Tx_n,Ty_n)\leq d(x_n,Tx_n)+T(x_n,y_n)\\
&=&(1+s_n)d(x_n,Tx_n) \,\,\text{by (\ref{d-xn-yn-Txn})}.\\
d(y_n,Ty_n)&\leq & (1-s_n)d(x_n,Ty_n)+s_nd(Tx_n,Ty_n)\quad \text{by (W1)}\\
&\leq& (1-s_n)(1+s_n)d(x_n,Tx_n)+s_nd(x_n,y_n)\quad \text{by (\ref{d-xn-Txn-d-xn-Tyn})}\\
&=&d(x_n,Tx_n)\quad \text{by (\ref{d-xn-yn-Txn})}.
\eua
Let us prove now (\ref{Ishikawa-as-reg-base-ineq}). First, let us remark that
\bua
d(x_n,Tx_{n+1}) &\leq & d(x_n,x_{n+1})+d(x_{n+1},Tx_{n+1 })\\
&=& \lambda_nd(x_n,Ty_n)+d(x_{n+1},Tx_{n+1 })\quad  \text{by (\ref{d-xn-Tyn-xn+1})}\\
\text{and}\\
d(y_n,x_{n+1})&\leq & (1-\lambda_n)d(y_n,x_n)+\lambda_nd(y_n,Ty_n)\quad \text{by (W1)}.
\eua
Moreover,
\bua
d(x_{n+1},Tx_{n+1})&\leq & (1-\lambda_n)d(x_n,Tx_{n+1})+\lambda_nd(Ty_n,Tx_{n+1})\, \text{by (W1)}\\
&\leq & (1-\lambda_n)\big[d(x_n,x_{n+1})+d(x_{n+1},Tx_{n+1 })\big]+\\
&& +\lambda_nd(y_n,x_{n+1})\\
&\leq &(1-\lambda_n)d(x_{n+1},Tx_{n+1 })+(1-\lambda_n)\lambda_nd(x_n,Ty_n)+\\
&& +\lambda_n(1-\lambda_n)d(y_n,x_n)+\lambda_n^2d(y_n,Ty_n)\\
&& \text{by (\ref{d-xn-Tyn-xn+1}) and (W1), hence}\\
d(x_{n+1},Tx_{n+1})&\leq & (1-\lambda_n)d(x_n,Ty_n)+(1-\lambda_n)d(y_n,x_n)+\\
&& +\lambda_nd(y_n,Ty_n)\\
&\leq & (1-\lambda_n)(1+s_n)d(x_n,Tx_n)+(1-\lambda_n)s_nd(x_n,Tx_n)\\
&&+\lambda_nd(x_n,Tx_n)\quad \text{by (\ref{d-xn-Txn-d-xn-Tyn}), (\ref{d-xn-yn-Txn}) and (\ref{d-xn-Txn-d-yn-Tyn})}\\
&=& (1+2s_n(1-\lambda_n))d(x_n,Tx_n).
\eua
\item 
\bua
d(T_nx,T_ny) &\leq & \!\!\!\lambda_nd\big(T((1-s_n)x\oplus s_nTx,\, T((1-s_n)y\oplus s_nTy)\big)+\\
&& +(1-\lambda_n)d(x,y)\\
&\leq & \!\!\!(1-\lambda_n)d(x,y)+ \lambda_n\big[(1-s_n)d(x,y)+s_nd(Tx,Ty)\big]\\
&& \text{by (W4)}\\
&\leq & \!\!\!(1-\lambda_n)d(x,y)+\lambda_n\big[(1-s_n)d(x,y)+s_nd(x,y)\big]\\
&=& \!\!\!d(x,y).
\eua
\item
\bua
d(x_{n+1},p) &=& d(T_nx_n,T_np)\leq d(x_n,p),\\
d(y_n,p) &\leq & (1-s_n)d(x_n,p)+s_n d(Tx_n,p)\\
&=& (1-s_n)d(x_n,p)+s_n d(Tx_n,Tp)\leq  d(x_n,p),\\
d(x_n,Tx_n) &\leq & d(x_n,p)+d(Tx_n,p)\leq 2d(x_n,p),\\
d(x_n,Ty_n) &\leq & d(x_n,p)+d(Ty_n,p)\leq d(x_n,p)+d(y_n,p)\leq 2d(x_n,p).
\eua
\ee
\end{proof}

\begin{lemma}{\bf (Main technical lemma)}\label{Ishikawa-main-technical-lemma}\\Assume that $(X,d,W)$ is a $UCW$-hyperbolic space with a monotone modulus of uniform convexity $\eta$ and $p\in F(T)$. Let $x\in C,n\in\N$.
\be
\item \label{IMTL-eta} If $\gamma,\beta,\tilde{\beta},a>0$ are such that 
\bua
\gamma\le d(x_n,p)\le \beta,\tilde{\beta} \quad  \text{and~~} \quad  a\le d(x_n,Ty_n),\eua
then 
\bua
d(x_{n+1},p)&\le& d(x_n,p) - 2\gamma\lambda_n(1-\lambda_n)\eta\left(\tilde{\beta},\frac{a}{\beta}\right).
\eua
\item \label{IMTL-tilde-eta} Assume moreover that $\eta$ can be written as $\eta(r,\varepsilon)=\varepsilon\cdot\tilde{\eta}(r,\varepsilon)$ such that $\tilde{\eta}$ increases with $\varepsilon$ (for a fixed $r$). If $\delta,a>0$ are such that 
\bua
d(x_n,p)\le \delta \quad  \text{and~}\quad  a\le d(x_n,Ty_n),
\eua
then
\bua
d(x_{n+1},p)&\le& d(x_n,p)-2a\lambda_n(1-\lambda_n)\tilde{\eta}\left(\delta,\frac{a}{\delta}\right).
\eua
\ee
\end{lemma}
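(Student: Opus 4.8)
The plan is to obtain both parts from Lemma~\ref{Ishikawa-useful} together with the Groetsch-type estimate in Lemma~\ref{eta-prop-1}. The key identity is $x_{n+1}=(1-\lambda_n)x_n\oplus\lambda_n Ty_n$, so $d(x_{n+1},p)$ is the distance from $p$ to a convex combination of $x_n$ and $Ty_n$. First I would record the two distances that feed the uniform convexity hypothesis: on the one hand $d(x_n,p)\le\beta$ by assumption, and on the other $d(Ty_n,p)=d(Ty_n,Tp)\le d(y_n,p)\le d(x_n,p)\le\beta$ using that $T$ is nonexpansive, that $p$ is fixed, and Lemma~\ref{Ishikawa-useful}.(\ref{Ishikawa-useful-p}). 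So both $x_n$ and $Ty_n$ lie in the closed ball of radius $\beta$ around $p$, and they are separated by at least $a\le d(x_n,Ty_n)$.

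For part~(\ref{IMTL-eta}), I apply Lemma~\ref{eta-prop-1}.(\ref{eta-monotone-s-geq-r}) with the roles $a\mapsto p$, $x\mapsto x_n$, $y\mapsto Ty_n$, radius $r:=\beta$, $\varepsilon:=a/\beta$ (note $a/\beta\le 2$ since $d(x_n,Ty_n)\le d(x_n,p)+d(p,Ty_n)\le 2\beta$), and the larger radius $s:=\tilde\beta\ge r$; here monotonicity of $\eta$ is exactly what lets us pass from radius $\beta$ to radius $\tilde\beta$. This yields
\[
d(x_{n+1},p)\le\bigl(1-2\lambda_n(1-\lambda_n)\eta(\tilde\beta,a/\beta)\bigr)\beta.
\]
To turn the factor $\beta$ in front into $d(x_n,p)$ as claimed, I rewrite the right-hand side as $\beta-2\lambda_n(1-\lambda_n)\eta(\tilde\beta,a/\beta)\beta$ and then note $\beta$ can be replaced by $d(x_n,p)$ in the subtracted term only after replacing the leading $\beta$ by $d(x_n,p)$; the clean way is: since $d(x_{n+1},p)\le\beta$ trivially would be too weak, instead observe $d(x_{n+1},p)\le d(x_n,p)$ (Lemma~\ref{Ishikawa-useful}.(\ref{Ishikawa-useful-p})) is not enough either, so one must argue directly that $\eta(\tilde\beta,a/\beta)\cdot\beta\ge\eta(\tilde\beta,a/\beta)\cdot\gamma$ and use $d(x_{n+1},p)\le\beta-2\lambda_n(1-\lambda_n)\eta(\tilde\beta,a/\beta)\gamma$ together with... this is where care is needed.

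\emph{The main obstacle} is precisely bridging from the bound with a leading $\beta$ to the stated bound with leading $d(x_n,p)$ and a $\gamma$ in the error term. The resolution I expect the authors use: apply Lemma~\ref{eta-prop-1}.(\ref{eta-monotone-s-geq-r}) not with radius $\beta$ but treat $r_n:=d(x_n,p)$ itself as the radius for the ball around $p$, then use part~(\ref{eta-s-geq-r}) or monotonicity to replace $r_n$ by $\tilde\beta$ in the modulus argument while keeping $r_n$ as the outer factor, and finally bound the outer occurrence of $r_n$ in the subtracted term below by $\gamma$. Concretely: $d(x_n,p),d(Ty_n,p)\le d(x_n,p)=:r_n$ and $d(x_n,Ty_n)\ge a=\frac{a}{r_n}\cdot r_n$, so Lemma~\ref{eta-prop-1}.(\ref{eta-monotone-s-geq-r}) with $s:=\tilde\beta\ge r_n$ gives $d(x_{n+1},p)\le(1-2\lambda_n(1-\lambda_n)\eta(\tilde\beta,a/r_n))r_n = r_n-2\lambda_n(1-\lambda_n)\eta(\tilde\beta,a/r_n)r_n$; then $\eta(\tilde\beta,a/r_n)\ge\eta(\tilde\beta,a/\beta)$ (monotone modulus is nondecreasing in $\varepsilon$ — using $r_n\le\beta$) and $r_n\ge\gamma$ handle the rest. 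For part~(\ref{IMTL-tilde-eta}), I redo the same computation but factor $\eta(r,\varepsilon)=\varepsilon\tilde\eta(r,\varepsilon)$: with $r_n=d(x_n,p)\le\delta$, the term $2\lambda_n(1-\lambda_n)\cdot\frac{a}{r_n}\tilde\eta(\tilde\beta,a/r_n)\cdot r_n = 2a\lambda_n(1-\lambda_n)\tilde\eta(\tilde\beta,a/r_n)$, the $r_n$ cancels, and monotonicity of $\tilde\eta$ in $\varepsilon$ together with $\tilde\eta(\delta,\cdot)\le\tilde\eta(\tilde\beta,\cdot)$ — wait, one needs $\tilde\eta$ monotone in $r$ too or takes $\tilde\beta=\delta$; taking the radius to be $\delta$ directly via Lemma~\ref{eta-prop-1}.(\ref{eta-monotone-s-geq-r}) with $s=\delta$ and using $a/r_n\ge a/\delta$ with $\tilde\eta$ increasing in $\varepsilon$ gives $\tilde\eta(\delta,a/r_n)\ge\tilde\eta(\delta,a/\delta)$, yielding exactly $d(x_{n+1},p)\le d(x_n,p)-2a\lambda_n(1-\lambda_n)\tilde\eta(\delta,a/\delta)$.
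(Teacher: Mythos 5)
Your route is essentially the paper's: after the false start with radius $\beta$, you arrive at exactly the computation the paper performs, namely applying Lemma \ref{eta-prop-1}.(\ref{eta-monotone-s-geq-r}) with $r:=d(x_n,p)$ and $s:=\tilde{\beta}$ so that the leading factor is $d(x_n,p)$ rather than $\beta$, and then bounding the subtracted term below via $d(x_n,p)\ge\gamma$; your part (\ref{IMTL-tilde-eta}) likewise matches the paper's, which applies part (\ref{IMTL-eta}) with $\gamma:=\beta:=d(x_n,p)$ and $\tilde{\beta}:=\delta$ and cancels $d(x_n,p)$ against $a/d(x_n,p)$ before using the assumed $\varepsilon$-monotonicity of $\tilde{\eta}$. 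The preliminary observations ($d(Ty_n,p)\le d(y_n,p)\le d(x_n,p)$ via Lemma \ref{Ishikawa-useful}.(\ref{Ishikawa-useful-p}), and $a/\beta\in(0,2]$) are also the ones the paper records.

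There is one misjustified step in your part (\ref{IMTL-eta}): the inequality $\eta(\tilde{\beta},a/r_n)\ge\eta(\tilde{\beta},a/\beta)$, which you attribute to ``monotone modulus is nondecreasing in $\varepsilon$''. In this paper ``monotone'' means decreasing in $r$ for fixed $\varepsilon$; no monotonicity in $\varepsilon$ is assumed, and a general modulus need not have it. The step you need is nevertheless available, but for a different reason: instead of invoking uniform convexity with $\varepsilon:=a/r_n$ and then trying to shrink $\varepsilon$ afterwards, invoke it with $\varepsilon:=a/\beta$ from the outset. This is legitimate because the separation hypothesis already holds for the smaller $\varepsilon$: $d(x_n,Ty_n)\ge a=(a/\beta)\cdot\beta\ge(a/\beta)\cdot d(x_n,p)$. (Weakening $\varepsilon$ in this way is precisely the content of Lemma \ref{eta-prop-1}.(\ref{eta-monotone-eps}), and it is how the paper sets up the application of Lemma \ref{eta-prop-1}.(\ref{eta-monotone-s-geq-r}).) With that one repair the argument is complete; note also that the entire first half of your part (\ref{IMTL-eta}), producing the bound with leading factor $\beta$, can simply be discarded. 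In part (\ref{IMTL-tilde-eta}) the analogous replacement $\tilde{\eta}(\delta,a/r_n)\ge\tilde{\eta}(\delta,a/\delta)$ is fine as you wrote it, since there $\varepsilon$-monotonicity of $\tilde{\eta}$ is an explicit hypothesis.
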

\begin{proof}
\be
\item First, let us remark that, using Lemma \ref{Ishikawa-useful}.(\ref{Ishikawa-useful-p})
\begin{eqnarray*}
d(Ty_n,p)=d(Ty_n,Tp)\le d(y_n,p)\le d(x_n,p)\le \beta,\tilde{\beta},  \\
d(x_n,Ty_n)\ge a= \left(\frac{a}{\beta}\right)\cdot\beta\ge \left(\frac{a}{\beta}\right)\cdot d(x_n,p), \text{~and~}\\
0<a\le d(x_n,Ty_n)\le 2d(x_n,p)\le  2\beta, \text{~so~}\frac{a}{\beta}\in(0,2].
\end {eqnarray*}
Thus, we can apply Lemma \ref{eta-prop-1}.(\ref{eta-monotone-s-geq-r}) with $\ds r:=d(x_n,p), s:=\tilde{\beta},\eps:=\frac{a}{\beta}$ to obtain
\begin{eqnarray*}
d(x_{n+1},p)&=&d((1-\lambda_n)x_n\oplus\lambda_nTy_n,p)\\
&\le& \left(1- 2\lambda_n(1-\lambda_n)\eta\left(\tilde{\beta},\frac{a}{\beta}\right)\right)d(x_n,p)\\
&=& d(x_n,p) - 2\lambda_n(1-\lambda_n)\eta\left(\tilde{\beta},\frac{a}{\beta}\right)d(x_n,p)\\
&\le&d(x_n,p) - 2\gamma\lambda_n(1-\lambda_n)\eta\left(\tilde{\beta},\frac{a}{\beta}\right),
\end {eqnarray*} 
since $d(x_n,p)\ge \gamma$ by hypothesis.
\item Since, by Lemma \ref{Ishikawa-useful}.(\ref{Ishikawa-useful-p}), $0<a\le d(x_n,Ty_n)\le 2d(x_n,p)$, we can apply (i)  with $\gamma:=\beta:=d(x_n,p)>0$ and $\tilde{\beta}:=\delta$ to get that 
\begin{eqnarray*}
d(x_{n+1},p)&\le& d(x_n,p)- 2d(x_n,p)\lambda_n(1-\lambda_n)\eta\left(\delta,\frac{a}{d(x_n,p))}\right)\\
&=& d(x_n,p)- 2a\lambda_n(1-\lambda_n)\tilde{\eta}\left(\delta,\frac{a}{d(x_n,p)}\right)\\
&\le& d(x_n,p)- 2a\lambda_n(1-\lambda_n)\tilde{\eta}\left(\delta,\frac{a}{\delta}\right),
\end {eqnarray*}
since $\displaystyle \frac{a}\delta\le \frac{a}{d(x_n,p)}$ and $\tilde{\eta}$ increases with $\varepsilon$ by hypothesis.
\ee
\end{proof}

We recall some terminology.  Let $(a_n)_{n\geq 0}$ be a sequence of real numbers. A {\em rate of divergence} of a divergent series $\ds \sum_{n=0}^\infty a_n$ is  a function $\theta:\N\to\N$ satisfying  $\ds\sum_{i=0}^{\theta(n)}a_i \geq n$ for all $n\in\N$. \\
If $\limn a_n=a\in\R$, then a function $\gamma:(0,\infty)\to\N$ is called 
\be
\item[-] a {\em Cauchy modulus} of $(a_n)$ if $|a_{\gamma(\eps)+n}-a_{\gamma(\eps)}|$ for all $\eps>0,n\in\N$;
\item[-] a {\em rate of convergence} of $(a_n)$ if $|a_{\gamma(\eps)+n}-a|<\eps$ for all $\eps>0,n\in\N$.
\ee
A {\em Cauchy modulus} of a convergent series $\ds \sum_{n=0}^\infty a_n$ is a Cauchy modulus of the sequence $(s_n)$ of partial sums, $s_n:=\ds \sum_{i=0}^n a_i$.

\bprop\label{Ishikawa-liminf-xn-Tyn=0}
Let $(X,d,W)$ be a $UCW$-hyperbolic space with a monotone  modulus of uniform convexity $\eta$, $C\se X$ a nonempty convex subset, and $T:C\rightarrow C$ nonexpansive with $F(T)\ne\emptyset$.

If $\ds\sum_{n=0}^\infty\lambda_n(1-\lambda_n)$ is divergent, then $\liminf_n d(x_n,Ty_n)=0$ for all $x\in C$.

Furthermore, if $\theta :\N\to\N$ is a rate of divergence for  $\ds\sum_{n=0}^\infty\lambda_n(1-\lambda_n)$, then for all  $x\in C,\eps>0,k\in\N$ there exists $N\in\N$ such that 
\beq 
k\leq N\leq h(\varepsilon,k,\eta,b,\theta) \text{~~and~~} d(x_N,Ty_N)<\varepsilon,\label{quant-liminf-d-xn-Tyn}
\eeq
 where
\[h(\varepsilon,k,\eta,b,\theta):=\left\{\begin{array}{ll}\displaystyle \theta\left(\left\lceil\frac{b+1}{\varepsilon\cdot\eta\left(b,\displaystyle\frac{\varepsilon}{b}\right)}\right\rceil+k\right)  & \text{for~ } \varepsilon \le 2b,\\
k & \text{otherwise,}
\end{array}\right.
\]
with $b>0$ such that $b\ge d(x,p)$ for some $p\in F(T)$.
\eprop
\begin{proof}
Let $x\in C, p\in F(T)$ and $b>0$ be such that $d(x,p)\leq b$. Since $(d(x_n,p))$ is decreasing, it follows that $d(x_n,p)\le d(x,p)\le b$ for all $n\in\N$.

Let $\eps>0, k\in\N$ and $\theta:\N\to\N$ be as in the hypothesis. We shall prove the existence of $N$ satisfying (\ref{quant-liminf-d-xn-Tyn}), which implies $\liminf_n d(x_n,Ty_n)=0$. 

First, let us remark that $d(x_n,Ty_n)\le 2d(x_n,p)\le 2b$ for all $n\in\N$, hence the case $\varepsilon > 2b$ is obvious. Let us consider $\varepsilon<2b$ and denote
\[P:=\left\lceil\frac{b+1}{\eps\eta\left(b,\frac{\eps}{b}\right)}\right\rceil,\]
so $h(\varepsilon,k,\eta,b,\theta):=\theta(P+k)\ge P+k> k$.  

Assume by contradiction that $d(x_n,Ty_n)\geq\varepsilon$ for all $n=\ol{k,\theta(P+k)}$. Since $\ds b\geq d(x_n,p)\geq \frac{d(x_n,Ty_n)}2\geq\frac{\eps}2$,   we can apply Lemma \ref{Ishikawa-main-technical-lemma}.(\ref{IMTL-eta}) with $\ds \beta:=\tilde{\beta}:=b, \gamma:=\frac{\eps}2$ and $a:=\eps$  to obtain that for all $n=\overline{k,\theta(P+k)}$,
\begin{eqnarray}
d(x_{n+1},p)&\le& d(x_n,p)- \eps\lambda_n(1-\lambda_n)\eta\left(b,\frac{\eps}{b}\right).\label{ineq-cor-1}
\end {eqnarray}
Adding (\ref{ineq-cor-1}) for $n=\overline{k,\theta(P+k)}$, it follows that 
\begin{eqnarray*}
d(x_{\theta(P+k)+1},p)&\le& d(x_k,p)-\eps\eta\left(b,\frac{\eps}{b}\right)\sum_{n=k}^{\theta(P+k)}\lambda_n(1-\lambda_n)\\
&\le& b-\eps\eta\left(b,\frac{\eps}{b}\right)\cdot P\leq b-(b+1)=-1,
\end {eqnarray*}
that is a contradiction. We have used the fact that 
\bua
 \sum_{n=k}^{\theta(P+k)}\lambda_n(1-\lambda_n)&= &\sum_{n=0}^{\theta(P+k)}\lambda_n(1-\lambda_n)-\sum_{n=0}^{k-1}\lambda_n(1-\lambda_n)\\
&\ge& \sum_{n=0}^{\theta(P+k)}\lambda_n(1-\lambda_n)-k\ge (P+k)-k=P.
\eua
\end{proof}

As an immediate consequence of the above proposition, we get a rate of asymptotic regularity for the Krasnoselski-Mann iterates, similar with the one obtained in \cite[Theorem 1.4]{Leu07}.

\bcor
Let $(X,d,W),\eta,C,T,b,(\lambda_n),\theta$ be as in the hypotheses of Proposition 
\ref{Ishikawa-liminf-xn-Tyn=0} and assume that $(x_n)$ is the Krasnoselski-Mann iteration starting with $x$, defined by (\ref{Ishikawa-KM-iteration-def-hyp}). 

Then $\limn d(x_n,Tx_n)=0$ for all $x\in C$ and, furthermore, 
\beq
\forall \eps>0\, \forall n\ge \Phi(\eps,\eta,b,\theta)\bigg(d(x_n,Tx_n)<\eps\bigg), \label{Ishikawa-KM-rate-as-reg}
\eeq
where $\Phi(\varepsilon,\eta,b,\theta):=h(\varepsilon,0,\eta,b,\theta)$,
with $h$ defined as above.
\ecor
\begin{proof}
Applying Proposition \ref{Ishikawa-liminf-xn-Tyn=0} with $s_n:=0$ (hence $y_n=x_n$) and $k:=0$, we get the existence of $N\le \Phi(\varepsilon,\eta,b,\theta)$ such that $d(x_N,Tx_N)<\eps$. Use the fact that $(d(x_n,Tx_n))$ is decreasing to get (\ref{Ishikawa-KM-rate-as-reg}).
\end{proof}

\bprop\label{Ishikawa-liminf-xn-Txn=0}
In the hypotheses of the above proposition, assume moreover that $\limsup_n s_n<1$. Then $\liminf_n d(x_n,Tx_n)=0$ for all $x\in C$.
 
Furthermore, if $L,N_0\in\N$ are such that $\ds s_n\leq 1-\frac1L$ for all $n\geq N_0$, then for all  $x\in C,\eps>0,k\in\N$ there exists $N\in\N$ such that 
\beq
k\leq N\leq \Psi(\varepsilon,k,\eta,b,\theta,L,N_0) \text{~~and~~} d(x_N,Tx_N)<\varepsilon, \label{quant-Ishikawa-liminf-d-xn-Txn}
\eeq
where $\ds \Psi(\varepsilon,k,\eta,b,\theta,L,N_0):=h\left(\frac{\eps}L, k+N_0,\eta,b,\theta\right)$, with $h$ defined as in Proposition \ref{Ishikawa-liminf-xn-Tyn=0}.
\eprop
\begin{proof}
Let $x\in C,\eps>0, k\in\N$. Applying Proposition \ref{Ishikawa-liminf-xn-Tyn=0} for $k+N_0$ and $\ds\frac{\eps}L$, we get the existence of $N$ such that $\ds N_0\le k+N_0\le N\le h\left(\frac{\eps}L, k+N_0,\eta,b,\theta\right)=\Psi(\eps,k,\eta,b,\theta,L,N_0)$ and $\ds d(x_N,Ty_N)<\frac{\eps}L$. 
Using (\ref{d-xn-Txn-d-xn-Tyn}) and the hypothesis, it follows that 
\bua
d(x_N,Tx_N)&\le & \frac{1}{1-s_N}d(x_N,Ty_N) < \frac{L\eps}L=\eps.
\eua
\end{proof}

As a corollary, we obtain an approximate fixed point bound for the nonexpansive mapping $T$.

\bcor\label{Ishikawa-AFP-bound}
In the hypotheses of Proposition \ref{Ishikawa-liminf-xn-Txn=0},
\beq
\forall \eps>0\, \exists N\le \Phi(\varepsilon,\eta,b,\theta,L,N_0)\bigg( d(x_N,Tx_N)<\varepsilon\bigg),
\eeq
where $\Phi(\eps,\eta,b,\theta,L,N_0):=\Psi(\eps,0,\eta,b,\theta,L,N_0)$ with $\Psi$ defined as above.
\ecor

We are ready now to prove the main result of this section. 

\bthm\label{Ishikawa-quant-as-reg}
Let $(X,d,W)$ be a $UCW$-hyperbolic space with a monotone  modulus of uniform convexity $\eta$, $C\se X$  a nonempty convex subset, and $T:C\rightarrow C$ nonexpansive with $F(T)\ne\emptyset$.\\
Assume that $\ds\sum_{n=0}^\infty\lambda_n(1-\lambda_n)$ diverges, $\limsup_n s_n<1$ and $\ds\sum_{n=0}^\infty s_n(1-\lambda_n)$ converges. 

Then for all $x\in C$,
\[\limn d(x_n,Tx_n)=0.\] 
Furthermore, if $\theta$ is a rate of divergence for $\ds\sum_{n=0}^\infty\lambda_n(1-\lambda_n)$, $L,N_0$ are such that $\ds s_n\leq 1-\frac1L$ for all $n\geq N_0$  and $\gamma$ is a Cauchy modulus for $\ds\sum_{n=0}^\infty s_n(1-\lambda_n)$, then for all  $x\in C$,
\beq
\forall \eps>0\forall n\ge \Phi(\eps,\eta,b,\theta,L,N_0,\gamma)\bigg(d(x_n,Tx_n)<\eps\bigg),
\eeq
where 
\[\Phi(\varepsilon,\eta,b,\theta,L,N_0,\gamma):= \left\{\begin{array}{ll}\!\!\displaystyle \theta\left(\left\lceil\frac{2L(b+1)}{\varepsilon\cdot\eta\left(b,\displaystyle\frac{\varepsilon}{2Lb}\right)}\right\rceil+\gamma\left(\frac{\eps}{8b}\right)+N_0+1\right)  & \!\!\!\text{for~} \eps\le 4Lb,\\
\!\!\ds \gamma\left(\frac{\eps}{8b}\right)+N_0+1 & \!\!\!\text{otherwise,}
\end{array}\right.
\]
with $b>0$ such that $b\ge d(x,p)$ for some $p\in F(T)$.
\ethm
\begin{proof}
Let $x\in C, p\in F(T)$ and $b>0$ be such that $d(x,p)\leq b$ and let us denote $\alpha_n:=\ds\sum_{i=0}^n s_i(1-\lambda_i)$. 
Since  $d(x_n,Tx_n)\le 2d(x_n,p)\le 2b$ for all $n\in\N$, we get by
(\ref{Ishikawa-as-reg-base-ineq}) that for all $n\in\N$,   
\bua
d(x_{n+1},Tx_{n+1})\leq (1+2s_n(1-\lambda_n))d(x_n,Tx_n)\leq d(x_n,Tx_n)+4bs_n(1-\lambda_n),
\eua
hence for all $m\in\N,n\ge 1$,
\bua
d(x_{m+n},Tx_{m+n})&\leq & d(x_n,Tx_n)+4b(\alpha_{n+m-1}-\alpha_{n-1}).
\eua
Let $\eps>0$ and apply Proposition \ref{Ishikawa-liminf-xn-Txn=0} with $\ds\frac{\eps}2$ and $\ds k:=\gamma(\eps/8b)+1$ to get $N\in\N$ such that $\ds d(x_N,Tx_N)<\frac{\eps}2$ and
\bua
\gamma(\eps/8b)+1\le N&\le& \Psi\left(\frac{\eps}2,\gamma(\eps/8b)+1,b,\theta,L,N_0\right)\\
&=& h\left(\frac{\eps}{2L}, \gamma(\eps/8b)+1+N_0,\eta,b,\theta\right)\\
&=&\Phi(\varepsilon,\eta,b,\theta,L,N_0,\gamma).
\eua
Since $\gamma$ is a Cauchy modulus for $(\alpha_n)$, it follows that for all $m\in\N$,
\[\alpha_{\ds m+\gamma(\eps/{8b})}-\alpha_{\ds\gamma(\eps/{8b})}=\left|\alpha_{\ds m+\gamma(\eps/{8b})}-\alpha_{\ds\gamma(\eps/{8b})}\right|<\frac{\eps}{8b}.\]

Let now $n\ge\Phi(\varepsilon,\eta,b,\theta,L,N_0,\gamma)\ge N$, hence $n=N+p=\gamma(\eps/8b)+1+q$ for some $p,q\in\N$. It follows that
\bua
d(x_n,Tx_n)&=&d(x_{N+p},Tx_{N+p})\le d(x_N,Tx_N)+4b(\alpha_{N+p-1}-\alpha_{N-1})\\
&=&d(x_N,Tx_N)+4b\left(\alpha_{\gamma(\eps/8b)+q}-\alpha_{N-1}\right)\\
&<& \frac{\eps}2+4b(\alpha_{\gamma(\eps/8b)+q}-\alpha_{\gamma(\eps/8b)})\\
&&\text{since~} N-1\ge \gamma(\eps/8b), \text{~so~} \alpha_{N-1}\ge\alpha_{\gamma(\eps/8b)}\\
&<& \frac{\eps}2+4b\cdot\frac{\eps}{8b}=\eps,
\eua
since $\gamma$ is a Cauchy modulus for $(\alpha_n)$.
\end{proof}

\begin{fact}\label{Ishikawa-quant-as-reg-tilde-eta}
In the hypotheses of Theorem \ref{Ishikawa-quant-as-reg}, assume, moreover, that  $\eta(r,\varepsilon)$ can be written as $\eta(r,\varepsilon)=\varepsilon\cdot\tilde{\eta}(r,\varepsilon)$ such that $\tilde{\eta}$ increases with $\varepsilon$ (for a fixed $r$). Then the bound $\Phi(\varepsilon,\eta,b,\theta,L,N_0,\gamma)$ can be replaced  for $\eps\le 4Lb$ with
\[ \tilde{\Phi}(\varepsilon,\eta,b,\theta,L,N_0,\gamma)=\theta\left(\left\lceil\frac{L(b+1)}{\varepsilon\cdot\tilde{\eta}\left(b,\displaystyle\frac{\varepsilon}{2Lb}\right)}\right\rceil+\gamma\left(\frac{\eps}{8b}\right)+N_0+1\right).\] 
\end{fact}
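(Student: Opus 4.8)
The modulus $\eta$ enters the proof of Theorem~\ref{Ishikawa-quant-as-reg} at a single point: through the chain Proposition~\ref{Ishikawa-liminf-xn-Txn=0} $\to$ Proposition~\ref{Ishikawa-liminf-xn-Tyn=0} $\to$ Lemma~\ref{Ishikawa-main-technical-lemma}.(\ref{IMTL-eta}). The plan is to re-run that three-stage argument essentially verbatim, replacing each appeal to part~(i) of the main technical lemma by an appeal to part~(ii); the latter is available precisely because of the extra hypothesis $\eta(r,\eps)=\eps\,\tilde\eta(r,\eps)$ with $\tilde\eta$ monotone in $\eps$. The point is that part~(ii) supplies the sharper one-step estimate $d(x_{n+1},p)\le d(x_n,p)-2a\lambda_n(1-\lambda_n)\tilde\eta(\delta,a/\delta)$ in place of the estimate through $\eta(b,\eps/b)=(\eps/b)\tilde\eta(b,\eps/b)$ used before, so the ``budget'' constant $P$ shrinks by a factor of order $b/\eps$, and the improvement then propagates mechanically to the final bound.

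Concretely, I would first redo Proposition~\ref{Ishikawa-liminf-xn-Tyn=0}. Fix $x\in C$, $p\in F(T)$ with $d(x,p)\le b$; then $d(x_n,p)\le b$ for all $n$ because $(d(x_n,p))$ is decreasing by Lemma~\ref{Ishikawa-useful}.(\ref{Ishikawa-useful-p}), and $d(x_n,Ty_n)\le 2d(x_n,p)\le 2b$, so the case $\eps>2b$ is trivial. For $\eps\le 2b$, replace the constant $P$ by $\tilde P:=\ds\left\lceil\frac{b+1}{2\eps\,\tilde\eta(b,\eps/b)}\right\rceil$ and suppose, for contradiction, that $d(x_n,Ty_n)\ge\eps$ for all $n=\ol{k,\theta(\tilde P+k)}$. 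Since $d(x_n,p)\le b$, $a:=\eps\le d(x_n,Ty_n)$ and $a/\delta=\eps/b\in(0,2]$, Lemma~\ref{Ishikawa-main-technical-lemma}.(\ref{IMTL-tilde-eta}) applies with $\delta:=b$ and gives, for each such $n$,
\bua
d(x_{n+1},p) &\le& d(x_n,p)-2\eps\lambda_n(1-\lambda_n)\,\tilde\eta\left(b,\frac{\eps}{b}\right).
\eua
Summing over $n=\ol{k,\theta(\tilde P+k)}$ and using $\sum_{n=k}^{\theta(\tilde P+k)}\lambda_n(1-\lambda_n)\ge\tilde P$ exactly as in the original proof yields $d(x_{\theta(\tilde P+k)+1},p)\le b-2\eps\,\tilde\eta(b,\eps/b)\,\tilde P\le b-(b+1)<0$, a contradiction. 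Hence in Proposition~\ref{Ishikawa-liminf-xn-Tyn=0} the function $h(\eps,k,\eta,b,\theta)$ may be replaced, for $\eps\le 2b$, by $\tilde h(\eps,k,\tilde\eta,b,\theta):=\theta\!\left(\left\lceil\frac{b+1}{2\eps\,\tilde\eta(b,\eps/b)}\right\rceil+k\right)$.

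The remaining two stages carry no further occurrence of $\eta$. In Proposition~\ref{Ishikawa-liminf-xn-Txn=0}, $h$ is used only as a black box with inputs $\eps/L$ and $k+N_0$, so $\Psi(\eps,k,\eta,b,\theta,L,N_0)=h(\eps/L,k+N_0,\eta,b,\theta)$ is replaced by $\tilde\Psi:=\tilde h(\eps/L,k+N_0,\tilde\eta,b,\theta)$, the rest of that proof being unchanged. In the proof of Theorem~\ref{Ishikawa-quant-as-reg}, Proposition~\ref{Ishikawa-liminf-xn-Txn=0} is invoked once, with $\eps/2$ and $k=\gamma(\eps/8b)+1$, while the telescoping estimate coming from (\ref{Ishikawa-as-reg-base-ineq}) and the argument with the Cauchy modulus $\gamma$ are untouched; replacing $\Psi$ by $\tilde\Psi$ there gives, for $\eps\le 4Lb$,
\bua
N &\le& \tilde h\left(\frac{\eps}{2L},\gamma(\eps/8b)+1+N_0,\tilde\eta,b,\theta\right)\\
&=& \theta\!\left(\left\lceil\frac{L(b+1)}{\eps\,\tilde\eta\left(b,\frac{\eps}{2Lb}\right)}\right\rceil+\gamma(\eps/8b)+N_0+1\right),
\eua
which is exactly $\tilde\Phi(\eps,\eta,b,\theta,L,N_0,\gamma)$, and the remainder goes through word for word. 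I do not expect a genuine obstacle here beyond bookkeeping: the only real thing to verify is that the hypotheses of Lemma~\ref{Ishikawa-main-technical-lemma}.(\ref{IMTL-tilde-eta}) hold at the place it is used — namely $d(x_n,p)\le\delta$, $a\le d(x_n,Ty_n)$, and $a/\delta\in(0,2]$ so that $\tilde\eta$ is evaluated inside its domain — and these follow immediately from the monotonicity of $(d(x_n,p))$ and from $d(x_n,Ty_n)\le 2d(x_n,p)$.
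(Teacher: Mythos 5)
Your proposal is correct and follows essentially the same route as the paper: the paper likewise defines $P:=\left\lceil\frac{b+1}{2\varepsilon\cdot\tilde{\eta}\left(b,\varepsilon/b\right)}\right\rceil$, reruns the proof of Proposition \ref{Ishikawa-liminf-xn-Tyn=0} with Lemma \ref{Ishikawa-main-technical-lemma}.(\ref{IMTL-tilde-eta}) (taking $\delta:=b$, $a:=\eps$) in place of part (\ref{IMTL-eta}), and then propagates the improved $\tilde h$ through the unchanged later stages. Your version just spells out the bookkeeping the paper leaves implicit.
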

\begin{proof}
As we have seen in the proof of Theorem \ref{Ishikawa-quant-as-reg}, 
\[\Phi(\varepsilon,\eta,b,\theta,L,N_0,\gamma)=h\left(\frac{\eps}{2L}, \gamma\left(\frac{\eps}{8b}\right)+1+N_0,\eta,b,\theta\right),\]
where $h$ is defined as in Proposition \ref{Ishikawa-liminf-xn-Tyn=0}. It is easy to see that using the extra assumptions on $\eta$, $h(\varepsilon,k,\eta,b,\theta)$ can be replaced  for $\eps<2b$ with
\[ \tilde{h}(\varepsilon,k,\eta,b,\theta):= \theta\left(\left\lceil
\frac{b+1}{2\varepsilon\cdot \tilde{\eta}\left(b,\frac{\varepsilon}{b}\right)}\right\rceil+k\right).\] 
Just define $\ds P := \left\lceil\frac{b+1}{2\varepsilon\cdot \tilde{\eta}\left(b,\frac{\varepsilon}{b}\right)}\right\rceil$ and follow the proof of Proposition \ref{Ishikawa-liminf-xn-Tyn=0} using Lemma \ref{Ishikawa-main-technical-lemma}.(\ref{IMTL-tilde-eta}) (with $\delta:=b,a:=\eps$) instead of Lemma \ref{Ishikawa-main-technical-lemma}.(\ref{IMTL-eta}).
\end{proof}

\begin{corollary}\label{Ishikawa-bounded-C}
 Let $(X,d,W)$ be a complete $UCW$-hyperbolic space, $C\se X$  a nonempty convex closed bounded subset with finite diameter $d_C$ and $T:C\rightarrow C$ nonexpansive.\\
Assume that $\eta,(\lambda_n),(s_n),\theta, L,N_0,\gamma$ are as in the hypotheses of Theorem \ref{Ishikawa-quant-as-reg}. 

Then $\limn d(x_n,Tx_n)=0$ for all $x\in C$ and, moreover,
\[\forall \varepsilon >0\,\forall n\ge \Phi(\eps,\eta,d_C,\theta,L,N_0,\gamma)\,
\bigg(d(x_n,Tx_n) <\varepsilon\bigg), \]
where $\Phi(\eps,\eta,d_C,\theta,L,N_0,\gamma)$ is defined as in Theorem \ref{Ishikawa-quant-as-reg} by replacing $b$ with $d_C$.
\ecor
\begin{proof}
We can apply Corollary \ref{BGK-uc-hyp-monotone} to get that $F(T)\ne\emptyset$. Moreover, $d(x,p)\le d_C$ for any $x\in C, p\in F(T)$, hence we can take $b:=d_C$ in Theorem \ref{Ishikawa-quant-as-reg}.
\end{proof}

Thus, for bounded $C$, we get an  effective rate of asymptotic regularity which depends on the error $\varepsilon$, on the modulus of uniform convexity $\eta$,  on the diameter $d_C$ of $C$, on $(\lambda_n), (s_n)$ via $\theta,L,N_0,\gamma$, but does not depend  on the nonexpansive mapping $T$, the starting point $x\in C$ of the iteration or other data related with $C$ and $X$.

The rate of asymptotic regularity can be further simplified in the case of constant $\lambda_n:=\lambda\in(0,1)$.

\begin{corollary}\label{Ishikawa-bounded-C-constant-lambda}
 Let $(X,d,W),\eta,C,d_C,T$ be as in the hypotheses of Corollary \ref{Ishikawa-bounded-C}.
Assume that $\lambda_n:=\lambda\in(0,1)$ for all $n\in\N$. \\
Furthermore, let $L,N_0$ be such that $\ds s_n\leq 1-\frac1L$ for all $n\geq N_0$  and assume that the series  $\ds\sum_{n=0}^\infty s_n$ converges  with Cauchy modulus $\delta$. 

Then for all $x\in C$, 
\beq
\forall \eps>0\forall n\ge \Phi(\eps,\eta,d_C,\lambda,L,N_0,\delta)\bigg(d(x_n,Tx_n)<\eps\bigg),
\eeq
where 
\[\Phi(\varepsilon,\eta,d_C,\lambda,L,N_0,\delta):= \left\{\begin{array}{ll}\!\displaystyle \left\lceil\frac{1}{\lambda(1-\lambda)}\cdot \frac{2L(d_C+1)}{\varepsilon\cdot\eta\left(d_C,\displaystyle\frac{\varepsilon}{2Ld_C}\right)}\right\rceil+M & \!\!\!\text{for~ } \varepsilon \le 4Ld_C,\\
\! M & \!\!\!\text{otherwise,}
\end{array}\right.
\]
with $\ds M:=\delta\left(\frac{\eps}{8d_C(1-\lambda)}\right)+N_0+1$.

Moreover, if $\eta(r,\varepsilon)$ can be written as $\eta(r,\varepsilon)=\varepsilon\cdot\tilde{\eta}(r,\varepsilon)$ such that $\tilde{\eta}$ increases with $\varepsilon$ (for a fixed $r$),  then the bound $\Phi(\varepsilon,\eta,d_C,\lambda,L,N_0,\delta)$ can be replaced  for $\eps\le 4Ld_C$ with
\[
\Phi(\varepsilon,\eta,d_C,\lambda,L,N_0,\delta)=  \left\lceil\frac{1}{\lambda(1-\lambda)}\cdot \frac{L(d_C+1)}{\varepsilon\cdot\tilde{\eta}\left(d_C,\displaystyle\frac{\varepsilon}{2Ld_C}\right)}\right\rceil+M.
\]
\ecor
\begin{proof}
It is easy to see that  
\[\displaystyle \theta:\N\to\N, \quad \theta(n)=\left\lceil\frac{n}{\lambda(1-\lambda)}\right\rceil\]
is a rate of divergence for $\ds \sum_{n=0}^\infty\lambda(1-\lambda)$. Moreover, 
\[\gamma:(0,\infty)\to\N, \quad \gamma(\eps)=\delta\left(\frac{\eps}{1-\lambda}\right)\]
is a Cauchy modulus for $\ds\sum_{n=0}^\infty s_n(1-\lambda)$.
Apply now Corollary \ref{Ishikawa-bounded-C} and Remark \ref{Ishikawa-quant-as-reg-tilde-eta}. 
\end{proof}

As we have seen in Section \ref{Ishikawa-section-UC-hyp}, $CAT(0)$-spaces are $UCW$-hyperbolic spaces with a modulus of uniform convexity $\ds \eta(r,\varepsilon):=\frac{\varepsilon^2}{8}$, which has the form required in Remark \ref{Ishikawa-quant-as-reg-tilde-eta}. Thus, the above result can be applied to $CAT(0)$-spaces. 

\begin{corollary}\label{CAT0-constant-lambda}
In the hypotheses of Corollary \ref{Ishikawa-bounded-C-constant-lambda}, assume moreover that $X$ is a $CAT(0)$-space.

Then for all $x\in C$, 
\beq
\forall \eps>0\forall n\ge \Phi(\eps,d_C,\lambda,L,N_0,\delta)\bigg(d(x_n,Tx_n)<\eps\bigg),
\eeq
where 
\[\Phi(\varepsilon,d_C,\lambda,L,N_0,\delta):= \left\{\begin{array}{ll}
\left\lceil\ds \frac{D}{\eps^2}\right\rceil+M,
& \text{for~ } \varepsilon \le 4Ld_C,\\
M & \text{otherwise,}
\end{array}\right.
\]
with $\ds M:=\delta\left(\frac{\eps}{8d_C(1-\lambda)}\right)+N_0+1,\,\, D=\ds\frac{16L^2d_C(d_C+1)}{\lambda(1-\lambda)}$.
\ecor

\end{document}